\newcommand{\field}[1]{\mathbb{#1}}
\newcommand{\fs}[1]{\mathcal{#1}}
\newcommand{\nats}{\field{N}}
\newcommand{\define}{:=}
\newcommand{\comment}[1]{}
\newcommand{\agent}{\mathbf{P}}     
\newcommand{\defterm}[1]{\textbf{#1}}
\newcommand{\truth}{\mathbf{T}}     
\newcommand{\belief}{\mathbf{B}}    
\newcommand{\cause}{\mathbf{C}}     
\theoremstyle{plain}
\newtheorem{theorem}{Theorem}
\newtheorem{remark}{Remark}
\theoremstyle{definition}
\newtheorem{definition}{Definition}
\begin{document}

\title{Logic, Reasoning under Uncertainty \\ and Causality}

\author{Pedro A. Ortega}

\maketitle


\begin{abstract}
A simple framework for reasoning under uncertainty and intervention is
introduced. This is achieved in three steps. First, logic is restated in
set-theoretic terms to obtain a framework for reasoning under certainty.
Second, this framework is extended to model reasoning under uncertainty.
Finally, causal spaces are introduced and shown how they provide enough
information to model knowledge containing causal information about the world.
\end{abstract}

\section{Bayesian Probability Theory}

It is advantageous to endow plausibilities with an explanatory framework that
has a logically intuitive appeal. Such a framework is Bayesian probability
theory. Simply put, Bayesian probability theory is a framework that extends
logic for reasoning under uncertainty.

\subsection{Reasoning under Certainty}

Logic is the most important framework of reasoning (under certainty). Here, it
is rephrased in set-theoretic terms\footnote{Strictly speaking, this
set-theoretic logic is ``a logic within logic'', since set theory is based on
standard logic.}. As will be seen, this facilitates its extension to a
framework for reasoning under uncertainty.

Let $\Omega$ be a set of
\defterm{outcomes}, which is assumed to be finite for simplicity. A subset
$A \subset \Omega$ is an \defterm{event}. Let ${}^c$, $\cup$ and $\cap$ be the
set-operations of \defterm{complement}, \defterm{union} and
\defterm{intersection} respectively. Let $\fs{F}$ be an
\defterm{algebra}, i.e.\ a set of events obeying the axioms
\begin{itemize}
  \item[A1.] $\fs{F} \neq \varnothing$.
  \item[A2.] $A \in \fs{F} \quad \Rightarrow \quad A^c \in \fs{F}$.
  \item[A3.] $A, B \in \fs{F} \quad \Rightarrow
    \quad A \cup B \in \fs{F}$.
\end{itemize}
In this framework, an outcome $\omega \in \Omega$ is a state of affairs and an
event $A \in \fs{F}$ is a proposition. Hence, a singleton $\{\omega\} \in
\fs{F}$ is an irreducible (i.e.\ atomic) proposition about the world. The
set-operations ${}^c$, $\cup$ and $\cap$ correspond to the logical connectives
of $\neg$ (negation), $\vee$ (disjunction) and $\wedge$ (conjunction)
respectively. They allow the construction of complex propositions from simpler
ones. An algebra is a system of propositions that is closed under negation and
disjunction (and hence is closed under conjunction as well), i.e.\ it comprises
all propositions that the reasoner might entertain.

\begin{remark}
A trivial consequence of the axioms is that both the universal event $\Omega$
and the impossible event $\varnothing$ are in $\fs{F}$.
\end{remark}

The objective of logic is to allow the reasoner to conclude the veracity of
events given information. Let $\fs{V} \define \{1, 0, ?\}$ be the set of
\defterm{truth states}, where 1 is \defterm{true}, 0 is \defterm{false}, and
{?} is \defterm{uncertain} (but known to be either true or false). From these,
$\{1, 0\}$ are called \defterm{truth values}. The \defterm{truth function} is
the set function $\truth$ over $\fs{F} \times \fs{F}$ defined as
\[
    A, B \in \fs{F}, \quad \truth(A|B)
        = \begin{cases}
            1   & \text{if $B \subset A$}, \\
            0   & \text{if $A \cap B = \varnothing$}, \\
            {?} & \text{else.}
            \end{cases}
\]
Furthermore, define the shorthand $\truth(A) \define \truth(A|\Omega)$. The
quantity $\truth(A|B)$ stands for the ``truth value of event~$A$ given that
event~$B$ is true''. Accordingly, the knowledge of the reasoner about the facts
of the world is represented by his truth function and his algebra. From his
point of view, a proposition can be either true, false or uncertain (i.e.\
having an unresolved truth value given his knowledge). Understanding the
definition of the truth function is straightforward. Claiming that an event~$B
\in \fs{F}$ \emph{is true} means that one of its members $\omega \in B$ is the
current outcome/state of affairs. Hence the veracity of $A$ given $B$ is
evaluated as follows (Figure~\ref{fig:truth-space}): if~$A$ contains every
outcome in~$B$ then it must be true as well; if~$A$ is known not to contain any
of $B$'s outcome then it must be false; and if~$A$ contains only part of $B$
then it cannot be resolved, since knowing that $\omega \in B$ does neither
imply that $\omega \in A$ nor $\omega \in A^c$. The definition of a truth space
follows.

\begin{figure}[htbp]
\begin{center}
    \small
    \psfrag{p1}[c]{(a)}
    \psfrag{p2}[c]{(b)}
    \psfrag{p3}[c]{(c)}
    \psfrag{p4}[c]{(d)}
    \psfrag{e1}[c]{$\truth(A|A) = 1$}
    \psfrag{e2}[c]{$\truth(A^c|A) = 0$}
    \psfrag{e3}[c]{$\truth(C|A) = 1$}
    \psfrag{e4}[c]{$\truth(B|A) = {?}$}
    \psfrag{l1}[c]{$A$}
    \psfrag{l2}[c]{$B$}
    \psfrag{l3}[c]{$C$}
    \psfrag{l4}[c]{$\Omega$}
    \includegraphics[]{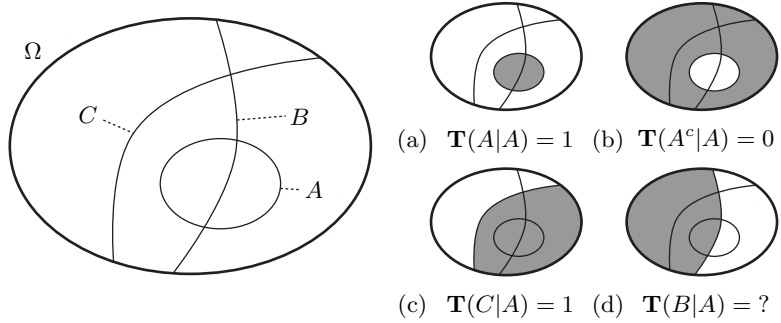}
    \caption[Truth Space.]{A truth space. It is known that the true outcome
    $\omega \in \Omega$ is in~$A$. Hence, (a) the event~$A$ is true
    and (b) its complement~$A^c$ is false. (c) Any event that contains
    a true event is true as well. (d) An event that contains only
    part of a true event is uncertain.}
    \label{fig:truth-space}
\end{center}
\end{figure}

\begin{definition}[Truth Space]
A \defterm{truth space} is a tuple $(\Omega, \fs{F}, \truth)$ where: $\Omega$
is a set of outcomes, $\fs{F}$ is an algebra over $\Omega$ and $\truth: \fs{F}
\times \fs{F} \rightarrow \fs{V}$ is a truth function.
\end{definition}

The intuitive meaning of a truth space is as follows. Nature arbitrarily
selects an outcome $\omega \in \Omega$. (This choice is \emph{not} governed by
a generative law.) \emph{Subsequently}, the reasoner performs a measurement: he
chooses a set~$B$ and nature reveals to him whether $\omega \in B$ or not.
Accordingly, the reasoners infers the veracity of any event~$A \in \fs{F}$ by
evaluating either $\truth(A|B)$ (if $\omega \in B$) or $\truth(A|B^c)$ (if
$\omega \notin B$).

Several measurements are combined as a conjunction. Thus, if the reasoner
learns that $\omega$ is in $B_1$, $B_2$, \ldots, and $B_t$ after performing~$t$
measurements, then the truth value is $\truth(A|B_1 \cap \cdots \cap B_t)$ for
any $A \in \fs{F}$.

\begin{remark}
Knowing that $\omega \in \Omega$ does not resolve uncertainty, i.e.\
$\truth(A|\Omega) = {?}$ for any $A \in \fs{F} \setminus \{\Omega,
\varnothing\}$, while knowing that $\omega \in \{\omega\}$ resolves all
uncertainty, i.e.\ $\truth(A|\{\omega\}) \in \{0,1\}$ for any $A \in \fs{F}$.
\end{remark}

\begin{remark}
The set relation $B \subset A$ corresponds to the logical relation $B
\Rightarrow A$. Since an algebra is an encoding of how sets are contained
within each other, it should be clear that an algebra is essentially a system
of implications.
\end{remark}

\subsection{Reasoning under Uncertainty}\label{sec:reasoning-under-uncertainty}

Unlike logic, Bayesian probability theory allows reasoning under uncertainty.
For this end, it provides a consistent mechanism to replace the uncertainty
state~${?}$ with a numerical value in the interval $[0,1]$ representing degrees
of truth, belief or plausibility.

\begin{figure}[htbp]
\begin{center}
    \small
    \psfrag{l1}[c]{(a)}
    \psfrag{l2}[c]{(b)}
    \psfrag{l3}[c]{(c)}
    \psfrag{s1}[c]{$A$}
    \psfrag{s2}[c]{$B$}
    \psfrag{s3}[c]{$C$}
    \psfrag{s4}[c]{$\Omega$}
    \psfrag{r2}[c]{$D$}
    \psfrag{p1}[c]{$\belief(A|B) = \truth(A|B) = 0$}
    \psfrag{p2}[c]{$\belief(A|B) = \frac{\belief(A \cap B|\Omega)}{\belief(B|\Omega)}$}
    \psfrag{p3}[c]{$\belief(A|B) = \truth(A|B) = 1$}
    \includegraphics[width=\textwidth]{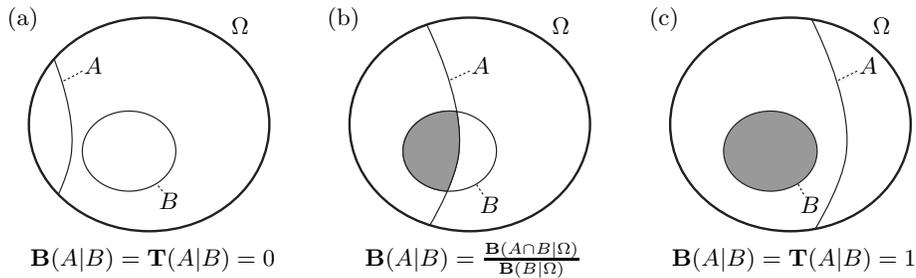}
    \caption{Extension of Truth Function.}
    \label{fig:probability-extension}
\end{center}
\end{figure}

The goal is to find a suitable definition of a quantity $\belief(A|B)$ meaning
``the degree of belief in event~$A$ given that event~$B$ is true'' that is
consistent with the truth function when it is certain, i.e. $\belief(A|B)
\define \truth(A|B)$ if $\truth(A|B) \in \{0,1\}$. Consider the three situations
in Figure~\ref{fig:probability-extension}. (a) In the case $A \cap B =
\varnothing$, we impose $\belief(A|B) \define \truth(A|B) = 0$. (b) In the case
$B \subset A$, we impose $\belief(A|B) \define \truth(A|B) = 1$. (c) In the
intermediate case where $\truth(A|B) = {?}$, the event~$A$ only partially
covers the members of~$B$. If one interprets the quantity $\belief(C|D)$ as
``the fraction of~$D$ contained in~$C$'', then one can characterize
$\belief(A|B)$ with the relation
\[
    \belief(A|B) = \frac{ \belief(A \cap B|\Omega) }{ \belief(B|\Omega) }
\]
as long as $\belief(B|\Omega) > 0$. It is easy to see that this formula
generalizes correctly to the border cases, since $\belief(A|B) =
\frac{0}{\belief(B|\Omega)} = 0$ when $A \cap B = \varnothing$ and
$\belief(A|B) = \frac{\belief(B|\Omega)}{\belief(B|\Omega)} = 1$ when $B
\subset A$. Noting that $B = B \cap \Omega$ and rearranging terms, one gets
\[
    \belief(A \cap B|\Omega) = \belief(B|\Omega) \, \belief(A|B \cap \Omega).
\]
This relation should hold under any restriction to a ``universal'' set $C \in
\fs{F}$, not only when it is restricted to~$\Omega$. Thus, replacing~$\Omega$
by~$C$ one obtains
\[
    \belief(A \cap B|C) = \belief(B|C) \, \belief(A|B \cap C),
\]
which is known as the \defterm{product rule} for beliefs. Following a similar
reasoning, we impose that for any event~$A \in \fs{F}$, the sum of the degree
of belief in~$A$ and its complement~$A^c$ must be true under any condition~$B$,
i.e.\
\[
    \belief(A|B) + \belief(A^c|B) = 1,
\]
which is known as the \defterm{sum rule} for beliefs. In summary, we impose the
following axioms for beliefs.

\begin{definition}[Belief axioms]\label{def:belief-axioms}
Let $\Omega$ be a set of outcomes and let $\fs{F}$ be an algebra over $\Omega$.
A set function $\agent$ over $\fs{F} \times \fs{F}$ is a \defterm{belief
function} iff
\begin{itemize}
    \item[B1.] $A, B \in \fs{F},
        \quad \belief(A|B) \in [0,1]$.
    \item[B2.] $A, B \in \fs{F},
        \quad \belief(A|B) = 1 \quad$ if $B \subset A$.
    \item[B3.] $A, B \in \fs{F},
        \quad \belief(A|B) = 0 \quad$ if $A \cap B = \varnothing$.
    \item[B4.] $A, B \in \fs{F},
        \quad \belief(A|B) + \belief(A^c|B) = 1$.
    \item[B5.] $A, B, C \in \fs{F},
        \quad \belief(A \cap B|C) = \belief(A|C) \, \belief(B|A \cap C)$.
\end{itemize}
\end{definition}
Furthermore, define the shorthand $\belief(A) \define \belief(A|\Omega)$.
Axiom~B1 states that degrees of belief are real values in the unit interval
$[0,1]$. Axioms~B2 and~B3 equate the belief and the truth function under
certainty. Axioms~B4 and~B5 are the structural requirements under uncertainty
discussed above. Accordingly, one defines a belief space as follows.

\begin{definition}[Belief Space]
A \defterm{belief space} is a tuple $(\Omega, \fs{F}, \belief)$ where: $\Omega$
is a set of outcomes, $\fs{F}$ is an algebra over $\Omega$ and $\belief: \fs{F}
\times \fs{F} \rightarrow [0,1]$ is a belief function.
\end{definition}

The intuitive meaning of a belief space is analogous to a truth space. Nature
arbitrarily selects an outcome $\omega \in \Omega$. \emph{Subsequently}, the
reasoner performs a measurement: he chooses a set~$B$ and nature reveals to him
whether $\omega \in B$ or not. Accordingly, the reasoners infers the degree of
belief in any event~$A \in \fs{F}$ by evaluating either $\belief(A|B)$ (if
$\omega \in B$) or $\belief(A|B^c)$ (if $\omega \notin B$).

\begin{remark}
The word ``subsequently'', that has been emphasized for the second time now, is
crucial. When the reasoner performs his measurements, the outcome is already
determined.
\end{remark}

\emph{An easy but fundamental result is that the axioms of belief are
equivalent to the axioms of probability\footnote{More precisely, the axioms of
beliefs as stated here imply the axioms of probability for finitely additive
measures over finite algebras. Furthermore, the axioms of beliefs also specify
a unique version of the conditional probability measure.}.} This simple
observation is what constitutes the foundation of Bayesian probability theory.

\subsection{Bayes' Rule}

We now return to the central topic of this chapter. Suppose the reasoner has
uncertainty over a set of competing hypotheses about the world. Subsequently,
he makes an observation. He can use this observation to update his beliefs
about the hypotheses. The following theorem explains how to carry out this
update.

\begin{theorem}[Bayes' Rule]
Let $(\Omega, \fs{F}, \belief)$ be a belief space. Let $\{ H_1, \ldots, H_N \}$
be a partition of $\Omega$, and let $D \in \fs{F}$ be an event such that
$\belief(D) > 0$. Then, for all $n \in \{1, \ldots, N\}$,
\[
    \belief(H_n|D)
    =
    \frac{ \belief(D|H_n) \, \belief(H_n) }
         { \belief(D) }
    =
    \frac{ \belief(D|H_n) \, \belief(H_n) }
         { \sum_m \belief(D|H_m) \, \belief(H_m) }.
\]
\end{theorem}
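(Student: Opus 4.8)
The plan is to derive both equalities directly from the belief axioms, leaning only on B4 (the sum rule) and B5 (the product rule) together with elementary set identities about the partition; no new machinery is needed.

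For the first equality I would apply B5 twice with conditioning event $C = \Omega$, using $A \cap \Omega = A$ throughout. Taking $(A,B,C) = (H_n, D, \Omega)$ in B5 gives $\belief(H_n \cap D) = \belief(H_n)\,\belief(D \mid H_n)$, and taking $(A,B,C) = (D, H_n, \Omega)$ gives $\belief(D \cap H_n) = \belief(D)\,\belief(H_n \mid D)$. Since $H_n \cap D = D \cap H_n$, equating the two expressions and dividing by $\belief(D) > 0$ yields $\belief(H_n \mid D) = \belief(D\mid H_n)\,\belief(H_n)/\belief(D)$.

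For the second equality it suffices to establish the denominator identity $\belief(D) = \sum_m \belief(D\mid H_m)\,\belief(H_m)$, i.e. the law of total probability. Because $\{H_m\}$ partitions $\Omega$, the events $\{D\cap H_m\}$ are pairwise disjoint with union $D \cap \Omega = D$, and the computation above already shows $\belief(D\cap H_m) = \belief(D\mid H_m)\,\belief(H_m)$; so the identity reduces to \emph{finite additivity} of $\belief(\,\cdot\mid\Omega)$ over pairwise disjoint events, which is the one ingredient not listed as an axiom and hence the step to treat carefully. For two disjoint events $E_1, E_2$ I would argue as follows: $(E_1\cup E_2)^c = E_1^c \cap E_2^c$, so B4 gives $\belief(E_1\cup E_2\mid\Omega) = 1 - \belief(E_1^c\cap E_2^c\mid\Omega)$; expand $\belief(E_1^c\cap E_2^c\mid\Omega) = \belief(E_1^c\mid\Omega)\,\belief(E_2^c\mid E_1^c)$ by B5, rewrite the second factor by B4 as $1 - \belief(E_2\mid E_1^c)$, note that $\belief(E_1^c\mid\Omega)\,\belief(E_2\mid E_1^c) = \belief(E_1^c\cap E_2\mid\Omega) = \belief(E_2\mid\Omega)$ since $E_2 \subseteq E_1^c$, and finally use $\belief(E_1^c\mid\Omega) = 1 - \belief(E_1\mid\Omega)$. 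Collecting terms gives $\belief(E_1\cup E_2\mid\Omega) = \belief(E_1\mid\Omega) + \belief(E_2\mid\Omega)$, and crucially this is done without ever dividing, so no special case (such as $\belief(E_1\mid\Omega) = 1$) needs separate handling. A routine induction on the number of blocks extends additivity to the full partition.

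Assembling the pieces, $\belief(D) = \sum_m \belief(D\cap H_m) = \sum_m \belief(D\mid H_m)\,\belief(H_m)$, and substituting this into the first equality produces the second. The only genuinely delicate point is the additivity lemma; the rest is bookkeeping with B4 and B5. Alternatively, one may simply invoke the remark that the belief axioms are equivalent to the probability axioms and cite the classical law of total probability, but deriving additivity explicitly keeps the proof self-contained.
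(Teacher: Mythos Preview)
The paper states Bayes' Rule as a theorem but gives no proof; it simply moves on to interpretation, implicitly leaning on the earlier remark that the belief axioms are equivalent to the probability axioms. Your proposal therefore supplies strictly more than the paper does.

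Your argument is correct. The first equality is an immediate consequence of applying B5 twice with $C=\Omega$, exactly as you describe. The substantive content lies in your derivation of finite additivity of $\belief(\,\cdot\mid\Omega)$ from B4 and B5 alone, which the paper asserts (via the equivalence remark) but never demonstrates. Your chain
\[
\belief(E_1\cup E_2)
=1-\belief(E_1^c\cap E_2^c)
=1-\belief(E_1^c)\bigl(1-\belief(E_2\mid E_1^c)\bigr)
=1-\belief(E_1^c)+\belief(E_2)
=\belief(E_1)+\belief(E_2)
\]
is valid, and you are right to emphasise that no division occurs, so no positivity hypothesis on $\belief(E_1^c)$ is needed for the algebra to go through. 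One cosmetic point: the only degenerate case your manipulation silently assumes away is $E_1=\Omega$ (forcing $E_2=\varnothing$ and conditioning on $E_1^c=\varnothing$, where B2 and B3 collide); in the intended application the blocks $D\cap H_m$ never exhaust $\Omega$ during the induction since each $H_m$ is nonempty, so this does not bite, but it would be tidy to dispose of $E_i=\varnothing$ separately before invoking the general step.

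In short, the paper takes the classical route of citing the equivalence with probability axioms; you give the elementary, axiom-level derivation. Your approach is self-contained and makes explicit the one nontrivial lemma (additivity) that the paper leaves to the reader.
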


The interpretation is as follows. The $H_1, \ldots, H_N$ represent $N$ mutually
exclusive \defterm{hypotheses}, and the event~$D$ represents an new observation
or \defterm{data}. Initially, the reasoner holds a \defterm{prior belief}
$\belief(H_n)$ over each hypothesis $H_n$. Subsequently, he incorporates the
observation of the event $D$ and arrives at a \defterm{posterior belief}
$\belief(H_n|D)$ over each hypothesis $H_n$. Bayes' rule states that this
update can be seen as combining the prior belief $\belief(H_n)$ with the
\defterm{likelihood} $\belief(D|H_n)$ of observation $D$ under hypothesis $H_n$.
The denominator $\sum_m \belief(D|H_m) \belief(H_m) = \belief(D)$ just plays
the r\^{o}le of a normalizing constant (Figure~\ref{fig:bayes-rule}).

\begin{figure}[htbp]
\begin{center}
    \small
    \psfrag{l1}[c]{$\Omega$}
    \psfrag{h1}[c]{$H_1$}
    \psfrag{h2}[c]{$H_2$}
    \psfrag{h3}[c]{$H_3$}
    \psfrag{d}[c]{$D$}
     \includegraphics[]{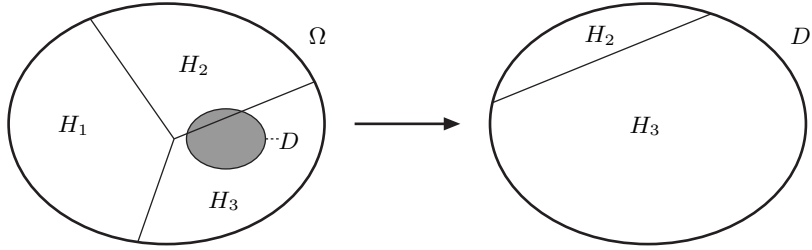}
    \caption[Bayes' rule.]{Schematic Representation of Bayes' Rule. The prior belief
    in hypotheses $H_1$, $H_2$ and $H_3$ is roughly uniform. After conditioning on the
    observation~$D$, the belief in hypothesis~$H_3$ increases significantly.}
    \label{fig:bayes-rule}
\end{center}
\end{figure}

Bayes' rule naturally applies to a sequential setting. Incorporating a new
observation~$D_t$ after having observed $D_1, D_2, \ldots, D_{t-1}$ updates the
beliefs as
\[
    \belief(H_n|D_1 \cap \cdots \cap D_t) =
    \frac{ \belief_t(D_t|H_n) \, \belief_t(H_n) }
         { \sum_m \belief_t(D_t|H_m) \, \belief(H_m) },
\]
where for the $t$-th update,
\[
    \belief_t(H_n) \define \belief(H_n|D_1 \cap \cdots \cap D_{t-1})
    \quad \text{and} \quad
    \belief_t(D_t|H_n) \define \belief(D_t|H_n \cap D_1 \cap \cdots \cap D_{t-1})
\]
play the r\^{o}le of the prior belief and the likelihood respectively. Note
that
\[
    \belief(D_1 \cap \cdots \cap D_t|H_n)
    = \prod_{\tau=1}^t \belief(D_\tau|H_n \cap D_1 \cap \cdots \cap D_{\tau-1}),
\]
and hence each hypothesis $H_n$ naturally determines a probability measure
$\belief(\cdot|H_n)$ over sequences of observations.

\begin{figure}[htbp]
\begin{center}
    \small
    \psfrag{l1}[c]{$\Omega$}
    \psfrag{h1}[c]{$H_1$}
    \psfrag{h2}[c]{$H_2$}
    \psfrag{h3}[c]{$H_3$}
    \psfrag{d1}[c]{$S_1$}
    \psfrag{d2}[c]{$S_2$}
    \psfrag{d3}[c]{$S_3$}
    \psfrag{d4}[c]{$S_4$}
    \psfrag{d5}[c]{$S_5$}
    \includegraphics[]{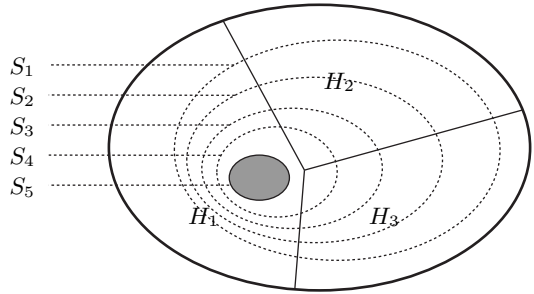}
    \caption[Progressive Refinement of Accuracy.]{Progressive refinement of
    the accuracy of the joint observation. The sequence of observations
    $D_1, \ldots, D_5$ leads to refinements $S_1, S_2, \ldots, S_5$, where
    $S_t = D_1 \cap \cdots \cap D_t$. Note that $S_5 \subset H_1$
    and therefore $\belief(H_1|S_5) = 1$, while $\belief(H_2|S_5) = \belief(H_3|S_5) = 0$.}
    \label{fig:data-sequence}
\end{center}
\end{figure}

A smaller event~$D$ corresponds to a more ``accurate'' observation. Hence,
making a new observation~$D'$ necessarily improves the accuracy, since
\[
    D \supset D \cap D'.
\]
In some cases, the accuracy of an observation (or sequence of observations) can
be so high that it uniquely identifies a hypothesis
(Figure~\ref{fig:data-sequence}).

The way Bayes' rule operates can be illustrated as follows. Consider a
partition $\{X_1, \ldots, X_K\}$ of $\Omega$ and let $H_\ast \in \{H_1, \ldots,
H_N\}$ be the true hypothesis, i.e.\ the outcome $\omega \in \Omega$ is drawn
obeying propensities described by $\belief(\cdot|H_\ast)$. The $X_k$ represent
different observations the reasoner can make. If $\omega$ is drawn and reported
to be in $X_k$, then the log-posterior probability of hypothesis $H_n$ is given
by
\[
    \log \belief(H_n|X_k)
    = \underbrace{\log \belief(X_k|H_n)}_{l_n}
    + \underbrace{\log \belief(H_n)}_{p_n}
    - \underbrace{\log \belief(X_k)}_{c}.
\]
This decomposition highlights all the relevant terms for understanding Bayesian
learning. The term~$l_n$ is the log-likelihood of the data $X_k$. The
term~$p_n$ is the log-prior of hypothesis~$H_n$, which is a way of representing
the relative confidence in hypothesis~$H_n$ prior to seeing the data. In
practice, it can also be interpreted as (a) a complexity term, (b) the
log-posterior resulting from ``previous'' inference steps, or (c) an
initialization term for the inference procedure. The term~$c$ is the
log-probability of the data, which is constant over the hypotheses, and thus
does not affect our analysis. Hence, log-posteriors are compared by their
differences in $l_n + p_n$. Ideally, the log-posterior should be maximum for
the true hypothesis $H_n = H_\ast$. However, since $\omega$ is chosen randomly,
the log-posterior $\log \belief(H_n|X_k)$ is a random quantity. If its variance
is high enough, then a particular realization of the data can lead to a
log-posterior favoring some ``wrong'' hypotheses over the true hypothesis,
i.e.\ $l_n + p_n > l_\ast + p_\ast$ for some $H_n \neq H_\ast$. In general,
this is an unavoidable problem (that necessarily haunts \emph{every}
statistical inference method). Further insight can be gained by analyzing the
expected log-posterior:
\[
    \underbrace{ \sum_{X_k} \belief(X_k|H_\ast) \log \belief(X_k|H_n) }_{L_n}
    \,+\, \underbrace{ \vphantom{\sum_{X_k}} \log \belief(H_n) }_{P_n = p_n}
    \,-\, \underbrace{ \sum_{X_k} \belief(X_k|H_\ast) \log \belief(X_k) }_C.
\]
This reveals\footnote{For $p_i, q_i$ probabilities, $\sum_i p_i \log q_i$ is
maximum when $q_i = p_i$ for fixed $p_i$.} that, \emph{on average}, the
log-likelihood~$L_n$ is indeed maximized by $H_n = H_\ast$. Hence, the
posterior belief will, on average, concentrate its mass on the hypotheses
having high $L_n + P_n$.

\subsection{Conditioning on Events with Zero Belief}

There is one technical point that merits closer inspection. Consider two
events~$A, B \in \fs{F}$ such that $B \cap A \neq \varnothing$ but $\belief(B)
= 0$. One has that
\[
    \truth(A|B) =
    \begin{cases}
        1 & \text{if $B \subset A$} \\
        {?} & \text{else}
    \end{cases}
    \quad \text{and} \quad
    \belief(A \cap B) = \belief(B) \, \belief(A|B) = 0
\]
due to the definition of the truth function and due to Axiom~B4. From this, we
conclude that $\belief(A \cap B) = 0$. For $\belief(A|B)$ there are two
possible cases. If $B \subset A$, then $\belief(A|B) = 1$ due to Axiom~B2.
However, if $B \not \subset A$, then $\belief(A|B)$ is independent of the
degree of belief $\belief(C)$ of any event $C \in \fs{F}$. More generally, if
$D \in \fs{F}$ is such that $\belief(B|D) = 0$, then the value of
$\belief(A|B)$ is independent of the degree of belief $\belief(C|D)$ of any
event $C \in \fs{F}$.

The bottom line is that conditioning on an event with zero belief is a
well-defined operation under the belief axioms outlined in
Definition~\ref{def:belief-axioms}. This is not so in the case of the
probability axioms of measure theory. In measure theory, the probability
measure is a global measure $\mu$ over $\fs{F}$, i.e.\ a function assigning
probability mass $\mu(A)$ to any event~$A \in \fs{F}$. However, implicit in
this definition is the fact that these masses are measured w.r.t. the certain
event $\Omega$. Because of this, the information contained in the probability
measure~$\mu$ is insufficient to uniquely determine the conditional probability
measure $\mu(\cdot|B)$ arising from conditioning on an event $B \in \fs{F}$
having $\mu(B) = 0$. In contrast, the belief function $\belief$ is a
well-defined measure w.r.t. any conditioning event $B \in \fs{F}$, i.e.\
assigning probability mass $\belief(A|B)$ to any event~$A \in \fs{F}$.

\section{Causality}

Suppose there is an unknown cause influencing a result we are waiting for. As
soon as we observe the result, we learn something about the unknown cause.
However, if instead we decide to interrupt the natural regime of the process by
choosing the result ourselves, then our knowledge about the unknown cause will
not change. \emph{This is simply because we know that our current actions
cannot change the past anymore.} Meanwhile, in both cases, we learn something
about the future, i.e.\ about all the outcomes that will follow the result.

This distinction between belief updates following externally generated
observations and internally generated actions is not modeled in Bayesian
probability theory. Essentially, the theory lacks the formal tools to deal with
indeterminate outcomes chosen by the reasoner himself. This requires
introducing additional information to clearly identify the past and the future
of choices, or more abstractly speaking, introducing a causal order of events.

\section{Causal Spaces}

The aim of this section is to introduce causal spaces. Causal spaces contain
enough information to characterize the causal structure of a random process.

Let $\Omega$ be a finite set of \defterm{outcomes}. An \defterm{atom set}
$\fs{A}$ is a partition of $\Omega$, and an \defterm{atom} is a member $A \in
\fs{A}$. Given a set $\fs{E}$ of subsets of $\Omega$, define the
\defterm{algebra generated} by $\fs{E}$, written $\sigma(\fs{E})$, as the
smallest algebra over $\Omega$ containing every member of $\fs{E}$.
Furthermore, define the \defterm{atom set generated} by an algebra $\fs{F}$,
written $\alpha(\fs{F})$, as the largest set of atoms containing members of
$\fs{F}$. For any set $\fs{E}$ of subsets of $\Omega$, we also abbreviate
$\alpha(\fs{E}) \define \alpha(\sigma(\fs{E}))$.

\begin{remark}
In the finite case, it is easily seen that both generated algebras and
generated atom sets are unique.
\end{remark}

\begin{definition}[Primitive Events]
Let $E = (E_0, E_1, E_2, \ldots, E_N)$ be a finite sequence of subsets of
$\Omega$ called \defterm{primitive events}, where $E_0 \define \Omega$, and
where for all $n \geq 1$,
\[
    E_n \notin \sigma\biggl( \{ E_0, E_1, \ldots, E_{n-1} \} \biggr).
\]
Furthermore, define $\fs{E}_n \define \{E_n, E_n^c\}$ and $\fs{A}_0, \fs{A}_1,
\ldots, \fs{A}_N$ as the sequence of atom sets
\[
    \fs{A}_n \define \alpha\biggl( \{ E_0, E_1, \ldots, E_n \} \biggr).
\]
\end{definition}

This setup is illustrated in Figure~\ref{fig:causal-space}. The sequence of
primitive events is an abstract characterization of a random process that
occurs in discrete steps $n=1, 2, \ldots, N$. Each step~$n$ is associated with
a primitive event~$E_n$ representing a basic proposition whose truth value is
resolved during this step (and not before!), i.e.\ step~$n$ determines whether
the outcome $\omega \in \Omega$ is either in~$E_n$ or in~$E_n^c$. The $n$-th
atom set $\fs{A}_n$ contains one proposition for each possible path the random
process can take. Therefore, after~$n$ steps, the process will find itself in
one (and only one) of the members in~$\fs{A}_n$.

\begin{figure}[htbp]
\begin{center}
    \small
    \psfrag{s0}[c]{$E_0$}
    \psfrag{s1}[c]{$E_1$}
    \psfrag{s2}[c]{$E_2$}
    \psfrag{s3}[c]{$E_3$}
    \psfrag{a0}[c]{$\fs{A}_0$}
    \psfrag{a1}[c]{$\fs{A}_1$}
    \psfrag{a2}[c]{$\fs{A}_2$}
    \psfrag{a3}[c]{$\fs{A}_3$}
    \includegraphics[]{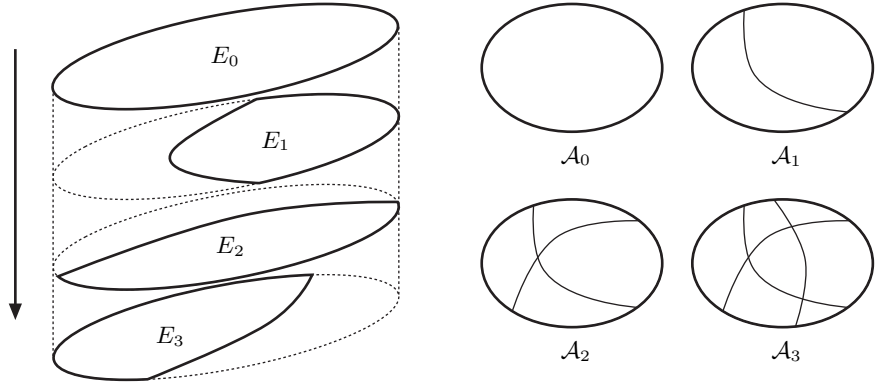}
    \caption{Primitive Events and their Atom Sets.}
    \label{fig:causal-space}
\end{center}
\end{figure}

\begin{remark}
The condition that $E_n$ cannot be in the algebra generated by the previous
events $E_0, \ldots, E_{n-1}$ guarantees that $E_n$ adds a new proposition that
cannot be expressed in terms of the previous propositions.
\end{remark}

The sequence of primitive events $E = (E_1, \ldots, E_N)$ can equivalently be
represented by any sequence $E' = (E'_1, \ldots, E'_N)$ where $E'_n \in
\fs{E}_n$. Due to this, we will call any member of $\fs{E}_n$ primitive event.
We introduce causal functions.

\begin{definition}[Causal Axioms]
Let $\Omega$ be a set of outcomes, and let $E = (E_1, \ldots, E_N)$ be a
sequence of primitive events. A set function $\cause_n$ is a
\defterm{$n$-th causal function} iff
\begin{itemize}
    \item[C1.] $A \in \fs{E}_n, B \in \fs{A}_{n-1},
        \quad \cause_n(A|B) \in [0,1]$.
    \item[C2.] $A \in \fs{E}_n, B \in \fs{A}_{n-1},
        \quad \cause_n(A|B) = 1 \quad$ if $B \subset A$.
    \item[C3.] $A \in \fs{E}_n, B \in \fs{A}_{n-1},
        \quad \cause_n(A|B) = 0 \quad$ if $A \cap B = \varnothing$.
    \item[C4.] $A \in \fs{E}_n, B \in \fs{A}_{n-1},
        \quad \cause_n(A|B) + \cause_n(A^c|B) = 1$.
\end{itemize}
Hence, $\cause_n$ maps $\fs{E}_n \times \fs{A}_{n-1}$ into $[0,1]$. A
\defterm{causal function} over $E$ is a function
\[
    \cause(A|B) = \cause_n(A|B), \qquad \text{if $A \in \fs{E}_n, B \in
    \fs{A}_{n-1}$},
\]
where $\cause_n$ is an $n$-th causal function. Hence, $\cause$ maps $\bigcup_n
(\fs{E}_n \times \fs{A}_{n-1})$ into $[0,1]$.
\end{definition}

The intuition behind this definition is as follows. The causal function
specifies the knowledge the reasoner has about the evolution of a random
process. It specifies the likelihood of a primitive event $A \in \fs{E}_n$ to
happen after the random process is known to have taken a path $B \in
\fs{A}_{n-1}$.

By comparing Axioms~C1--C4 with Axioms~B1--B5
(Section~\ref{sec:reasoning-under-uncertainty}) of belief functions, we observe
the following. First, in contrast to $\belief$, only a subset of combinations
$(A,B) \in \fs{F} \times \fs{F}$ is specified for $\cause$, namely, the ones
that chain a history of primitive events $B \in \fs{A}_{n-1} \subset \fs{F}$
together with the primitive event $A \in \fs{E}_n \subset \fs{F}$ that
immediately follows. Second, Axioms~C1--C4 play the same r\^{o}le as
Axioms~B1--B4, namely: (C1) probabilities lie in the unit interval $[0,1]$;
(C2~\&~C3) probabilities are consistent with the truth function; and (C4)
probabilities of complementary events add up to one. No axiom analogous to
Axiom~B5 is needed for $\cause$.

Putting everything together, one gets a causal space. A causal space contains
enough information to derive an associated belief space.

\begin{definition}[Causal Space]
A \defterm{causal space} is a tuple $(\Omega, E, \cause)$, where: $\Omega$ is a
set of outcomes, $E$ is sequence of primitive events, and $\cause$ is a causal
function over $E$.
\end{definition}

\begin{definition}[Induced Belief Space]
Given a causal space $(\Omega, E, \cause)$, the \defterm{induced belief space}
is the belief space $(\Omega, \fs{F}, \belief)$ where the algebra $\fs{F}$ and
the belief function $\belief$ are defined as
\begin{itemize}
    \item[i.]  $\fs{F} = \sigma\Bigl( \{E_0, E_1, \ldots, E_N\} \Bigr)$;
    \item[ii.] $\belief(A|B) = \cause(A|B)$,
        \quad for all $(A,B) \in \bigcup_n (\fs{E}_n \times \fs{A}_{n-1})$.
\end{itemize}
\end{definition}

Thus, the induced belief space is constructed by generating the algebra
$\fs{F}$ from the primitive events $E$, and by equating the belief function
$\belief$ to the causal function $\cause$ over the subset of $\fs{F} \times
\fs{F}$ where $\cause$ is defined. The following theorem tells us that this
subset is enough to completely determine the whole belief function.

\begin{theorem}
The induced belief space exists and is unique.
\end{theorem}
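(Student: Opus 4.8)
The plan is to separate the two assertions. The algebra $\fs{F}=\sigma(\{E_0,\dots,E_N\})$ is dictated by the definition and, in the finite case, generated algebras are unique, as already remarked; so the whole content is the existence and uniqueness of the belief function $\belief$. The structural fact I would use throughout is that $\fs{A}_0,\fs{A}_1,\dots,\fs{A}_N$ is a refinement chain: since $\sigma(\{E_0,\dots,E_{n-1}\})\subseteq\sigma(\{E_0,\dots,E_n\})$, each $\fs{A}_n$ refines $\fs{A}_{n-1}$; here $\fs{A}_0=\{\Omega\}$, $\fs{A}_N=\alpha(\fs{F})$ is the atom set of $\fs{F}$, and the atoms of $\fs{A}_n$ contained in a given $B\in\fs{A}_{n-1}$ are exactly the nonempty members of $\{B\cap E_n,\,B\cap E_n^c\}$. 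It is convenient to recast a belief function as a family $(\mu_B)$ of probability distributions, one $\mu_B$ over the atoms contained in $B$ for each nonempty $B\in\fs{F}$, via $\belief(A\,|\,B)=\sum_{a\subseteq A\cap B}\mu_B(a)$: with this encoding Axioms B1--B4 and finite additivity hold automatically, and B5 is equivalent to the consistency condition that $\mu_{B'}(a)=\mu_B(a)/\mu_B(B')$ whenever $B'\subseteq B$, $a\subseteq B'$ and $\mu_B(B')>0$; note that for an atom $a$ the distribution $\mu_a$ is necessarily the point mass at $a$.

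For existence I would exhibit such a family. For a chain-atom $B\in\fs{A}_m$ and an atom $a\subseteq B$, let $a=b^{(N)}\subset b^{(N-1)}\subset\cdots\subset b^{(m)}=B$ be the unique nesting with $b^{(n)}\in\fs{A}_n$ and $b^{(n)}=b^{(n-1)}\cap E'_n$ for the appropriate $E'_n\in\fs{E}_n$, and put $\mu_B(a)\define\prod_{n=m+1}^{N}\cause_n(E'_n\,|\,b^{(n-1)})$. Axiom C4 makes each $\mu_B$ a probability distribution, and telescoping the products shows these are mutually consistent along the chain. For a general $B$ with $\mu_\Omega(B)>0$, set $\mu_B\define\mu_\Omega(\cdot)/\mu_\Omega(B)$, which is consistent with $\mu_\Omega$ and hence with every chain-atom. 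For the remaining events ($\mu_\Omega(B)=0$) I would build a hierarchy: inside each chain-atom that gets zero $\mu_\Omega$-mass but whose parent gets positive mass, restart with that atom's product measure; repeat one level deeper; and so on; then read off $\mu_B$ by normalising the first stage that assigns $B$ positive mass. One checks the resulting family is consistent -- hence a belief function -- and that it agrees with $\cause$ on the primitive pairs, because for $B\in\fs{A}_{n-1}$ and $A\in\fs{E}_n$ the mass $\sum_{a\subseteq A\cap B}\mu_B(a)$ is exactly the leading factor $\cause_n(A\,|\,B)$ of the product measure at $B$.

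For uniqueness, let $(\mu_B)$ be any consistent family agreeing with $\cause$ on the primitive pairs. On atoms $\mu_a$ is forced (it is the point mass at $a$). I would then show $\mu_B$ is determined for every chain-atom $B$ by downward induction on its level: if $B\in\fs{A}_m$ is not split by $E_{m+1}$ then $B\in\fs{A}_{m+1}$ and the inductive hypothesis applies; otherwise its children are $B_+=B\cap E_{m+1}$, $B_-=B\cap E_{m+1}^c$, consistency gives $\mu_B(a)=\mu_B(B_+)\,\mu_{B_+}(a)$ for $a\subseteq B_+$ (and symmetrically for $B_-$), the weight $\mu_B(B_+)=\belief(E_{m+1}\,|\,B)=\cause_{m+1}(E_{m+1}\,|\,B)$ is prescribed, and $\mu_{B_+},\mu_{B_-}$ are fixed by induction. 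Taking the level down to $0$ determines $\mu_\Omega$, and then $\mu_B=\mu_\Omega(\cdot)/\mu_\Omega(B)$ is determined for every $B$ with $\mu_\Omega(B)>0$.

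The step I expect to be the real obstacle is the one left open in both halves: conditioning on events of zero belief that are not chain-atoms -- exactly the situation highlighted in the discussion of conditioning on zero-belief events, where B1--B5 on their own leave $\mu_B$ unconstrained by the unconditional beliefs. The content of the theorem is that the causal data remove this freedom: such a $B$ still lies inside a smallest chain-atom $B^{\star}$ carrying a determined conditional law, and one must show $\mu_B$ is obtained from $\mu_{B^{\star}}$ by propagating the intervening causal conditionals in a way that is both well defined (independent of how $B$ is cut into atoms) and forced -- the delicate case being a $B$ to which no enclosing conditional assigns positive mass. Dually, on the existence side one must check that the hierarchical construction really yields a consistent family, i.e.\ that B5 holds on all triples, not just those drawn from the chain. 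I expect essentially the whole write-up to be this bookkeeping around zero-belief conditions; everything else is the routine telescoping of products along the refinement chain.
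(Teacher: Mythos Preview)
Your core mechanism---telescoping along the refinement chain $\fs{A}_0,\dots,\fs{A}_N$ so that every conditional on a chain-atom factors as a product of causal conditionals---is exactly the paper's argument. The paper picks $r\le s$ with $B\in\fs{F}_r$, $A\in\fs{F}_s$, decomposes $A$ into $\fs{A}_s$-atoms, and applies B5 repeatedly to write each $\belief(a|b)$ as $\prod_{j=r+1}^{s}\cause(a^j\mid b\cap\bigcap_{i<j}a^i)$; your downward induction on the level of $B$ and your product formula for $\mu_B(a)$ are the same computation phrased through the $(\mu_B)$ encoding.

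Where you depart from the paper is in being more complete and more cautious. First, the paper's proof is purely a uniqueness argument---it exhibits a formula for $\belief(A|B)$ in terms of $\cause$ but never verifies that this formula actually defines a function satisfying B1--B5. Your existence half (product measure on chain-atoms, normalisation elsewhere, hierarchical restart on null events) fills that gap. Second, you correctly isolate the zero-belief, non-chain-atom case as the crux. The paper's proof glosses over this: its step ``$\belief(a|B)=\belief(a|b)$ because $a\cap B=a\cap b$'' is not a consequence of B1--B5 when $b\subsetneq B$; B5 gives $\belief(a|B)=\belief(b|B)\,\belief(a|b)$, and the factor $\belief(b|B)$ is \emph{not} of the form $\cause(\text{primitive}\mid\text{chain-atom})$ unless $B$ itself is a chain-atom. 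So the paper's displayed reduction is literally valid only when $B\in\fs{A}_r$ is a single atom, precisely the case you handle cleanly, and is silent on the case you flag as hard. Your instinct that the bookkeeping for general $B$ with $\belief(B)=0$ is where the real work lies is correct; the paper simply does not do that work.
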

\begin{proof}
Let $\fs{F}_0, \fs{F}_1, \ldots, \fs{F}_N$ denote the sequence of algebras
generated as
\[
    \fs{F}_n \define \sigma( \{ E_0, E_1, \ldots, E_n \} ).
\]
Let $r, s \in \nats$, $r \leq s$, be the smallest numbers such that $B$ is
$\fs{F}_r$-measurable and $A$ is $\fs{F}_s$-measurable. Let $\fs{B} \subset
\fs{A}_r$ and $\fs{A} \subset \fs{A}_s$ be the partitions of $B$ and $A$
respectively. Then, $\belief(A|B) = 0$ if $A \cap B$ by the belief axioms, and
\[
    \belief(A|B) = \sum_{a \in \fs{A}} \belief(a|B)
\]
otherwise, because the members $a$ of $\fs{A}$ are disjoint. For every $a \in
\fs{A}$, let $b \in \fs{B}$ be the unique member of the partition of $B$ such
that $a \subset b$. Obviously,
\[
    \belief(a|B) = \belief(a|b),
\]
because $a \cap B = a \cap b$. Let $a^1, a^2, \ldots, a^s$ the unique sequence
$a^j \in \fs{E}_j$ such that
\[
    a
    = a^1 \cap a^2 \cap \cdots \cap a^s
    = \bigcap_{j=1}^s a^j = b \cap \bigcap_{j=r+1}^s a^j,
\]
where the last equality comes from $b = a^1 \cap \cdots \cap a^r$. Hence,
\[
    \belief(a|b)
    = \belief\Bigl(\bigcap_{j=r+1}^s a^j \Bigr| b \Bigr)
    = \prod_{j=r+1}^s \belief\Bigl(a^j \Bigr| b \cap \bigcap_{i=r+1}^{j-1} a^i \Bigr)
    = \prod_{j=r+1}^s \cause\Bigl(a^j \Bigr| b \cap \bigcap_{i=r+1}^{j-1} a^i
    \Bigr).
\]
The last replacement can be done because $a^j \in \fs{E}_j$ and $b \cap
\bigcap_{i=r+1}^{j-1} a^i \in \fs{A}_{j-1}$. Thus, we have proven the
following. First, $\fs{F}$ is unique because generated algebras are unique.
Second, we have shown, for arbitrarily chosen events $A, B \in \fs{F}$, how to
reexpress $\belief(A|B)$ into an expression involving only terms of the form
$\cause(C|D)$. Hence, it cannot be that $\belief, \belief'$ are both consistent
with $\cause$ and there is $A, B \in \fs{F}$ such that $\belief(A|B) \neq
\belief'(A|B)$.
\end{proof}

We now define the operation that specifies how the knowledge about the random
process transforms when the reasoner himself intervenes it.

\begin{definition}[Intervention]\label{def:intervention}
Given a causal space $(\Omega, E, \cause)$ and a primitive event $A \in
\fs{E}_n$ for some $n \in \{1, \ldots, N\}$, the $A$-\defterm{intervention} is
the causal space $(\Omega, E, \cause')$ where for all $(B,C) \in \bigcup_n
(\fs{E}_n \times \fs{A}_{n-1})$,
\[
    \cause'(B|C) =
    \begin{cases}
        1 & \text{if $A = B\phantom{^c}$ and $(B \cap C) \notin \{\varnothing, C\}$,} \\
        0 & \text{if $A = B^c$ and $(B \cap C) \notin \{\varnothing, C\}$,} \\
        \cause(B|C) & \text{else.} \\
    \end{cases}
\]
\end{definition}

This is an important definition. The reasoner ask himself the question: ``How
do my beliefs about the world change if I were to choose the truth value of a
primitive event?'' This is answered by \emph{directly changing the causal
function accordingly} (Figure~\ref{fig:intervention}). However, this change
cannot contradict the logical constraints given by the underlying truth
function.

\begin{remark}
Note that $(B \cap C) \notin \{\varnothing, C\} \Leftrightarrow \truth(B|C) =
{?}$. Hence, an intervention can only affect primitive propositions $B \in
\fs{E}_n$ that have an unresolved truth value given the history $C \in
\fs{A}_{n-1}$. Moreover, the intervention resolves the truth value of $B$. This
makes intuitively sense.
\end{remark}

\begin{figure}[htbp]
\begin{center}
    \small
    \psfrag{l1}[c]{(a)}
    \psfrag{l2}[c]{(b)}
    \psfrag{e0}[c]{$E_0$}
    \psfrag{e1}[c]{$E_1$}
    \psfrag{e2}[c]{$E_2$}
    \psfrag{e3}[c]{$E_3$}
    \includegraphics[]{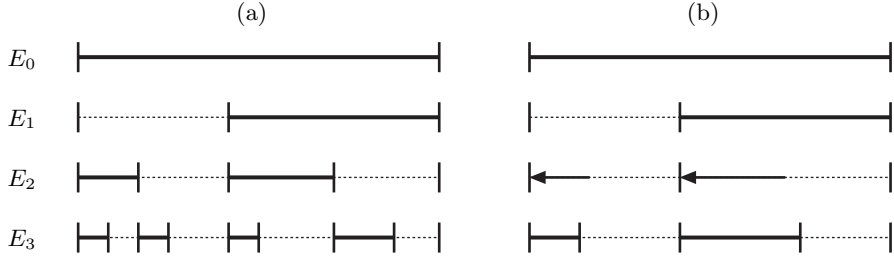}
    \caption[Intervention.]{An Intervention. The primitive events
    $E = (E_0, E_1, E_2, E_3)$ are sets on the unit interval. Panels~(a)
    and~(b) show a the causal space before and after an
    $E_2^c$-intervention respectively. This representation
    shows the atom sets $\fs{A}_0$ to $\fs{A}_3$ and conditional
    probabilities (given by the relative lengths).}
    \label{fig:intervention}
\end{center}
\end{figure}

We will use the abbreviation $\hat{A}$ to denote $A$-interventions on a causal
space. When the underlying causal space $(\Omega, E, \cause)$ inducing a belief
space $(\Omega, \fs{F}, \belief)$ is clear from the context, then the
expression $\belief(B|\hat{A})$ denotes the belief $\belief'(B|A)$ measured
w.r.t. the belief space $(\Omega, \fs{F}, \belief')$ induced by the
$A$-intervention of $(\Omega, E, \cause)$. Furthermore, when $A \in \fs{F}$ is
an event such that
\[
    A = \bigcap_{i=1}^I A_i,
\]
where each $A_i$ is a primitive event, then the $A$-intervention is the causal
space resulting as the succession of $A_i$-interventions.

\section{Concluding Remarks}

We have shown how to derive a simple framework for reasoning under uncertainty
and intervention. This is achieved in three steps. First, we have restated
logic in set-theoretic terms to obtain a framework for reasoning under
certainty. Second, we have extended this framework to model reasoning under
uncertainty. Finally, we have introduced causal spaces and shown how it
provides enough information to model knowledge containing causal information
about the world.

This framework can be extended in many ways. Importantly, it has been designed
to be consistent with the literature on Bayesian statistics \citep{Cox1961,
Jaynes2003} and the literature on causality based on graphs \citep{Pearl2000,
Spirtes2000, Dawid2010} and probability trees \citep{Shafer1996}.

\vskip 0.2in
\bibliographystyle{plainnat}
\small
\bibliography{bibliography}

\begin{thebibliography}{6}
\providecommand{\natexlab}[1]{#1}
\providecommand{\url}[1]{\texttt{#1}}
\expandafter\ifx\csname urlstyle\endcsname\relax
  \providecommand{\doi}[1]{doi: #1}\else
  \providecommand{\doi}{doi: \begingroup \urlstyle{rm}\Url}\fi

\bibitem[Cox(1961)]{Cox1961}
R.T. Cox.
\newblock \emph{The Algebra of Probable Inference}.
\newblock Johns Hopkins, 1961.

\bibitem[Dawid(2010)]{Dawid2010}
A.~P. Dawid.
\newblock Beware of the {DAG}!
\newblock \emph{Journal of Machine Learning Research}, (to appear), 2010.

\bibitem[Jaynes and Bretthorst(2003)]{Jaynes2003}
E.T. Jaynes and L.G. Bretthorst.
\newblock \emph{Probability Theory: The Logic of Science: Books}.
\newblock Cambridge University Press, 2003.

\bibitem[Pearl(2000)]{Pearl2000}
J.~Pearl.
\newblock \emph{Causality: Models, Reasoning, and Inference}.
\newblock Cambridge University Press, Cambridge, UK, 2000.

\bibitem[Shafer(1996)]{Shafer1996}
G.~Shafer.
\newblock \emph{The art of causal conjecture}.
\newblock The MIT Press, 1996.

\bibitem[Spirtes et~al.(2000)Spirtes, Glymour, and Scheines]{Spirtes2000}
P.~Spirtes, C.~Glymour, and R.~Scheines.
\newblock \emph{Causation, Prediction and Search}.
\newblock Springer-Verlag, New York, 2nd edition, 2000.

\end{thebibliography}

\end{document}